%
%
%
\documentclass{gtpart}
%
%
%
%
%
%
%
%
\usepackage[all]{xy}
\title[Formality of Sinha's cosimplicial space and Gerstenhaber algebras]{Formality of Sinha's cosimplicial model for long knots spaces and the Gerstenhaber algebra structure of homology}

%
\author{Paul Arnaud Songhafouo Tsopméné}
\givenname{Paul Arnaud}
\surname{Songhafouo Tsopméné}
\address{University of Regina,  3737 Wascana Pkwy, Regina, SK S4S 0A2, Canada}
\email{pso748@uregina.ca}
\urladdr{}

%
%
%
%
%

\keyword{multiplicative operads, model categories, long knots}
\subject{primary}{msc2000}{57Q45}
\subject{secondary}{msc2000}{18D50}
\subject{secondary}{msc2000}{55P48}
\subject{secondary}{msc2000}{17B63}

%

\arxivreference{arXiv:1210.2561v2}
\arxivpassword{}

%
%
\volumenumber{}
\issuenumber{}
\publicationyear{}
\papernumber{}
\startpage{}
\endpage{}
\doi{}
\MR{}
\Zbl{}
\received{}
\revised{}
\accepted{}
\published{}
\publishedonline{}
\proposed{}
\seconded{}
\corresponding{}
\editor{}
\version{}

%
%
%
%
%
%
\newtheorem{thm}{Theorem}[section]    
\newtheorem{lem}[thm]{Lemma}          
%
\theoremstyle{definition}
\newtheorem{defn}[thm]{Definition}    
\newtheorem{expl}[thm]{Example}  
\newtheorem{coro}[thm]{Corollary}
\newtheorem{prop}[thm]{Proposition}
\newtheorem{rem}[thm]{Remark} 
%
%


\begin{document}

\begin{abstract}    
Sinha constructed a cosimplicial space $\mathcal{K}_N^{\bullet}$ that gives a model for the space of long knots modulo immersions in $\mathbb{R}^N$, $N \geq 4$. On the other hand, Lambrechts, Turchin and Voli\'c showed that for $N \geq 4$ the homology Bousfield-Kan spectral sequence associated to Sinha's cosimplicial space $\mathcal{K}_N^{\bullet}$ collapses at the $E^2$ page rationally. Their approach  consists in first proving the formality of some other diagrams approximating $\mathcal{K}_N^{\bullet}$ and next deducing the collapsing result. In this paper, we prove directly the formality of Sinha's cosimplicial space, which immediately implies the collapsing result for $N \geq 3$. Moreover, we prove that the isomorphism between the $E^2$ page and the homology of the space of long knots modulo immersions respects the Gerstenhaber algebra structure, when $N \geq 4$. 
\end{abstract}

\maketitle


\section{Introduction}

A \textit{multiplicative operad} in a symmetric monoidal category $\mathcal{C}$ consists of a couple $(\mathcal{O}, \alpha)$ in which $\mathcal{O}$  is a non-symmetric operad in $\mathcal{C}$ and $\alpha$ is a morphism from the associative operad $\mathcal{A}s$ (see Example~\ref{as_example} below for its definition) to $\mathcal{O}$. The multiplicative operad that we look at in this paper is Kontsevich's operad $\mathcal{K}_N=\{\mathcal{K}_N(n)\}_{n \geq 0}$, which was defined and studied by Sinha in \cite[Definition 4.1 and Theorem 4.5]{sin06}. Notice that this operad is equivalent to the little $N$-disks operad. Notice also that  to any multiplicative operad $\mathcal{O}$, one can associate a cosimplicial object $\mathcal{O}^{\bullet}$ (see \cite[Section 10]{mcc_smith04}). Hence  the multiplicative operad $\mathcal{K}_N$ induces a cosimplicial space, $\mathcal{K}_N^{\bullet}$, called \textit{Sinha's cosimplicial space}.

For $N \geq 3$, a \textit{long knot} is a smooth embedding $\mathbb{R} \hookrightarrow \mathbb{R}^N$ that coincides outside a compact set with a fixed linear embedding. The \textit{space of long knots modulo immersion}, denoted by $\overline{\mbox{Emb}}(\mathbb{R}, \mathbb{R}^{N})$, is defined as the homotopy fiber of the inclusion of the space of long knots in the space of long immersions. In \cite{sin06} Sinha proves that for $N \geq 4$, the homotopy totalization of $\mathcal{K}_N^{\bullet}$ is weakly equivalent to the space of long knots modulo immersion,
$$\mbox{hoTot}\mathcal{K}_N^{\bullet} \simeq \overline{\mbox{Emb}}(\mathbb{R}, \mathbb{R}^N).$$
He also proves that for $N \geq 4$, the homology Bousfield-Kan spectral sequence associated to $\mathcal{K}_N^{\bullet}$ converges to the homology $H_*(\mbox{hoTot}\mathcal{K}_N^{\bullet}) \cong  H_*(\overline{\mbox{Emb}}(\mathbb{R}, \mathbb{R}^{N}))$ \cite[Theorem 7.2]{sin06}. Therefore it is natural to ask whether this spectral sequence collapses or not. This question was studied by Lambrechts, Turchin and Voli\'c in \cite{lam_tur_vol10}, who proved the following result.
\begin{thm}\emph{\cite[Theorem 1.2]{lam_tur_vol10}} \label{collapsing_thm}
For $N \geq 3$ the homology Bousfield-Kan spectral sequence associated to Sinha's cosimplicial space $\mathcal{K}_N^{\bullet}$ collapses at the $E^2$ page rationally.
\end{thm}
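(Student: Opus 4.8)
The plan is to reduce Theorem~\ref{collapsing_thm} to the formality of Sinha's cosimplicial space, which I would establish beforehand; the deduction is then essentially immediate, and this is the source of the ``very short proof''. Recall that the homology Bousfield-Kan spectral sequence of a cosimplicial space $X^\bullet$ is the spectral sequence of the double complex obtained by applying rational singular chains $C_*(-;\mathbb{Q})$ in each cosimplicial degree: filtering by the cosimplicial degree one gets $E^1_{p,q}=H_q(X^p;\mathbb{Q})$, with $d^1$ the alternating sum of the cofaces, then $E^2_{p,q}$ the cohomology of the cosimplicial graded vector space $H_*(X^\bullet;\mathbb{Q})$, and the abutment is the homology of the total complex --- identified with $H_*(\mathrm{hoTot}\,\mathcal{K}_N^\bullet)$ for $N\geq 4$ by \cite[Theorem~7.2]{sin06}. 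The decisive elementary fact is that this construction is functorial in the cosimplicial chain complex $C_*(X^\bullet;\mathbb{Q})$, and that any morphism of cosimplicial chain complexes which is a quasi-isomorphism in each cosimplicial degree induces an isomorphism of spectral sequences from the $E^1$ page on. Hence, to prove the collapse it suffices to connect $C_*(\mathcal{K}_N^\bullet;\mathbb{Q})$, through a zig-zag of such levelwise quasi-isomorphisms of cosimplicial chain complexes, to one whose internal differential vanishes.

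Such a zig-zag is exactly what formality provides. The first step is to upgrade Kontsevich's formality of the operad $\mathcal{K}_N$ to a formality compatible with the multiplicative structure: a zig-zag of quasi-isomorphisms of \emph{multiplicative} operads in chain complexes between $C_*(\mathcal{K}_N;\mathbb{Q})$ and its homology operad $H_*(\mathcal{K}_N;\mathbb{Q})$ (the $N$-Poisson operad), every arrow commuting with the structure maps from $\mathcal{A}s$. Applying the functor $\mathcal{O}\mapsto\mathcal{O}^\bullet$ of \cite[Section~10]{mcc_smith04} turns this into a zig-zag of cosimplicial chain complexes; and since the cosimplicial structure maps of $\mathcal{O}^\bullet$ involve only operadic compositions with the fixed point-classes in the image of $\mathcal{A}s$, the chains functor is compatible with $(-)^\bullet$, so the zig-zag starts at $C_*(\mathcal{K}_N^\bullet;\mathbb{Q})$ and ends at $\big(H_*(\mathcal{K}_N;\mathbb{Q})\big)^\bullet=H_*(\mathcal{K}_N^\bullet;\mathbb{Q})$ endowed with the zero differential. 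As the spaces $\mathcal{K}_N(n)$ have the homotopy type of finite CW complexes, all the arrows are quasi-isomorphisms in each cosimplicial degree.

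It remains to observe that the homology Bousfield-Kan spectral sequence of this formal model collapses at $E^2$ for trivial reasons: its double complex carries a single nonzero differential, the cosimplicial one, so $E^2_{p,q}=H^p\big(H_q(\mathcal{K}_N^\bullet;\mathbb{Q}),\delta\big)$ with $\delta$ the alternating coface sum, and $d^r=0$ for every $r\geq 2$. Transporting this isomorphism of spectral sequences back along the zig-zag, the homology Bousfield-Kan spectral sequence of $\mathcal{K}_N^\bullet$ collapses at the $E^2$ page rationally for every $N\geq 3$, which is the range in which the multiplicative operad $\mathcal{K}_N$, and hence $\mathcal{K}_N^\bullet$, is defined. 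Note that collapse is a statement about the spectral sequence alone and needs no convergence hypothesis, so the argument works in the full range $N\geq 3$, even though the identification of the abutment with $H_*(\overline{\mbox{Emb}}(\mathbb{R},\mathbb{R}^N))$ requires $N\geq 4$.

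The difficulty is concentrated entirely in the formality input of the second paragraph: one must realize Kontsevich's quasi-isomorphisms \emph{within the category of multiplicative operads}, so that $(-)^\bullet$ can be applied and the cosimplicial identities hold strictly at every stage. Once that is available, the passage from operadic to cosimplicial data, the identification $C_*(\mathcal{K}_N;\mathbb{Q})^\bullet\simeq C_*(\mathcal{K}_N^\bullet;\mathbb{Q})$, the invariance of the spectral sequence under levelwise quasi-isomorphism, and the final computation are all routine. The advantage over the argument of \cite{lam_tur_vol10} is organizational: the substantial work is carried out once, directly for $\mathcal{K}_N$, instead of for a whole tower of approximating diagrams.
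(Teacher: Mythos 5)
Your proposal is correct and follows essentially the same route as the paper: take the formality of $S_*(\mathcal{K}_N)$ as a \emph{multiplicative} operad as the key input, apply the McClure--Smith functor $\mathcal{O}\mapsto\mathcal{O}^{\bullet}$ to get formality of the cosimplicial chain complex $S_*(\mathcal{K}_N^{\bullet})$, and conclude collapse at $E^2$ because a levelwise quasi-isomorphism identifies the spectral sequence with that of the formal model, whose higher differentials vanish (the paper cites \cite[Proposition 3.2]{lam_tur_vol10} for this last step). Your write-up just makes the spectral-sequence bookkeeping more explicit; the only cosmetic difference is that the paper's formality is over $\mathbb{R}$ and the rational statement follows by the standard flat base-change $E^r\otimes_{\mathbb{Q}}\mathbb{R}$.
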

\begin{rem}
Actually in \emph{\cite{lam_tur_vol10}} \emph{Theorem~\ref{collapsing_thm}} is proved only for $N > 3$, but our approach also does the work for $N=3$.
\end{rem}
The authors of \cite{lam_tur_vol10} prove Theorem~\ref{collapsing_thm} without showing that $\mathcal{K}_N^{\bullet}$ is formal. The main ingredient of their proof is the relative version of Kontsevich's theorem on the formality of the little $N$-disks operad \cite{lam_vol}. This theorem states that there exists a chain of quasi-isomorphisms of operads between the singular chains on the little $N$-disks operad and its homology with real coefficients. In particular, there is a zig-zag 
$$S_*(\mathcal{K}_N) \stackrel{\sim}{\longleftarrow} \cdots \stackrel{\sim}{\longrightarrow} H_*(\mathcal{K}_N)$$
in which the chain complex of the Fulton-MarcPherson operad $S_*(\mathcal{F}_N)$ appears. This poses a serious problem to the authors of \cite{lam_tur_vol10} because the operad $S_*(\mathcal{F}_N)$ is not multiplicative, but only multiplicative "up to homotopy" in the sense of Definition~\ref{multiplicative_def}. This problem is solved by introducing certain finite diagrams of spaces called \textit{fanic diagrams}. 

Here are our results.
\begin{thm}\label{formality_thm} For $N \geq 3$, the operads $S_* (\mathcal{K}_N; \mathbb{R})$ and $H_* (\mathcal{K}_N; \mathbb{R})$ are weakly equivalent as multiplicative operads. 
\end{thm}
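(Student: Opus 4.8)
The plan is to transport the multiplicative structure through Kontsevich's formality zig-zag. The starting point is that, since $\mathcal{K}_N$ carries the structure map $\alpha\colon\mathcal{A}s\to\mathcal{K}_N$, applying $S_*(-)$ and $H_*(-)$ makes both $S_*(\mathcal{K}_N)$ and $H_*(\mathcal{K}_N)$ into multiplicative non-symmetric dg operads over $\mathbb{R}$. Moreover, since each configuration space $\mathcal{K}_N(n)$ is connected, $H_0(\mathcal{K}_N)\cong\mathcal{A}s$ as a non-symmetric operad, and a short unitality argument then identifies the multiplicative structure on $H_*(\mathcal{K}_N)$, up to the evident rescaling, with the canonical one $H_*(\alpha)$. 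So it suffices to join $S_*(\mathcal{K}_N)$ and $H_*(\mathcal{K}_N)$ by a chain of quasi-isomorphisms of non-symmetric dg operads, each commuting with the given maps from $\mathcal{A}s$.

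The second ingredient is Kontsevich's theorem. As $\mathcal{K}_N$ and the Fulton--MacPherson operad $\mathcal{F}_N$ are weakly equivalent as non-symmetric operads, the relative formality of the little $N$-disks operad (\cite{kont99}, in the form used in \cite[Section~3]{lam_tur_vol10}) supplies a zig-zag of quasi-isomorphisms of non-symmetric dg operads over $\mathbb{R}$ running from $S_*(\mathcal{K}_N)$ to $H_*(\mathcal{K}_N)$. Every operad $\mathcal{O}$ appearing in this zig-zag satisfies $H_*(\mathcal{O}(n))\cong H_*(\mathcal{K}_N(n))$; in particular $\mathcal{O}(0)$ and $\mathcal{O}(1)$ are acyclic and $H_0(\mathcal{O})\cong\mathcal{A}s$.

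I would then propagate the multiplicative structure along this zig-zag. A quasi-isomorphism pointing away from $S_*(\mathcal{K}_N)$ is absorbed by composing with the current map from $\mathcal{A}s$. For a quasi-isomorphism $\mathcal{O}'\to\mathcal{O}$ pointing towards $S_*(\mathcal{K}_N)$, one must lift the structure map of $\mathcal{O}$ along it; since $\mathcal{A}s$ is not cofibrant, I would first pass to its standard cofibrant resolution $A_\infty$ and use that every object is fibrant in the model category of non-symmetric dg operads over $\mathbb{R}$, so any quasi-isomorphism factors as a trivial cofibration followed by a trivial fibration and a map out of $A_\infty$ lifts without obstruction. This produces compatible $A_\infty$-structures --- multiplicative-up-to-homotopy structures in the sense of Definition~\ref{multiplicative_def} --- on all the operads in the zig-zag, with the structure on $S_*(\mathcal{K}_N)$ equal to the composite of the resolution $A_\infty\to\mathcal{A}s$ with $S_*(\alpha)$, and with some operad morphism $\varphi\colon A_\infty\to H_*(\mathcal{K}_N)$ at the other end.

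The hard part will be the rectification step: one must show that $\varphi$ is homotopic, through operad morphisms out of $A_\infty$, to the composite of $A_\infty\to\mathcal{A}s$ with the canonical structure map $\mathcal{A}s\to H_*(\mathcal{K}_N)$; equivalently, that the derived space of multiplicative structures on $H_*(\mathcal{K}_N)$ is connected. These maps and the homotopies between them are controlled by a deformation complex assembled from the spaces $H_*(\mathcal{K}_N)(k)$ with its convolution (Maurer--Cartan) differential, and the vanishing one needs has to be extracted from the description of $H_*(\mathcal{K}_N)$ as the $N$-Poisson operad together with the $(N-2)$-connectivity of configuration spaces in $\mathbb{R}^N$, valid for $N\ge 3$: although the groups $H_{k-1}(\mathcal{K}_N(k))$ that host the obstructions are nonzero for $k\ge N$, the convolution differential couples different arities and kills them. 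Granting this, the $A_\infty$-level zig-zag --- which at its endpoint now agrees, up to homotopy, with the canonical structure --- can be rectified to a chain of quasi-isomorphisms of genuine multiplicative operads, and by the essential uniqueness from the first step this chain necessarily terminates at $H_*(\mathcal{K}_N)$ with its canonical structure, proving the theorem. The genuine obstacle is therefore this homotopical rigidity of the multiplicative structure --- the exclusion of exotic $A_\infty$-multiplications; the transport itself is formal. (Should one read ``weakly equivalent as multiplicative operads'' in the weaker up-to-homotopy sense of Definition~\ref{multiplicative_def}, the first three steps already suffice and the rectification may be omitted.)
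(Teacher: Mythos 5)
Your overall architecture (first produce a zig-zag of up-to-homotopy multiplicative operads, then rectify to strict multiplicative structures) matches the paper's, but the way you execute the first step contains a genuine gap. You start from the \emph{absolute} formality of the little $N$-disks operad and then propagate an $A_\infty$-multiplication along the zig-zag by lifting, which forces you to prove afterwards that the induced $A_\infty$-structure on $H_*(\mathcal{K}_N)$ is homotopic to the canonical one. You correctly flag this as ``the hard part,'' but your argument for it is only an assertion: you concede that the obstruction groups $H_{k-1}(\mathcal{K}_N(k))$ are nonzero for $k\ge N$ and then claim, without proof, that the convolution differential kills the obstructions. Nothing in your write-up establishes this, and it is not a routine connectivity computation. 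The paper sidesteps the issue entirely by invoking the \emph{relative} formality theorem of Kontsevich and Lambrechts--Voli\'c (Theorem~\ref{formality_littledisks}): the formality zig-zag comes equipped with a compatible zig-zag under $S_*(\mathcal{B}_1)\to\cdots\to H_*(\mathcal{B}_1)\simeq\mathcal{A}s$, so every operad in the chain is already an up-to-homotopy multiplicative operad with the correct structure maps at both ends, and no obstruction theory or rigidity statement is needed. (Without this, even your weaker parenthetical claim --- equivalence as up-to-homotopy multiplicative operads --- is not secured, since your propagated structure on $H_*(\mathcal{K}_N)$ has not been identified with the canonical one.)

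The second step --- passing from the up-to-homotopy multiplicative zig-zag to a genuine zig-zag of multiplicative operads --- you describe as ``formal,'' but it is the actual technical content of the paper (Proposition~\ref{crucial_prop}). The mechanism is: factor $\eta\colon\mathcal{A}\to\mathcal{O}$ as a cofibration $\mathcal{A}\rightarrowtail Y$ followed by a trivial fibration, push out the cofibration along the weak equivalence $\sigma\colon\mathcal{A}\to\mathcal{A}s$, and use relative left properness of the category of non-symmetric operads (Theorem~\ref{propaxiom_nonsymoperads}, deduced from Fresse's Theorem~\ref{propaxiom_symoperads} via the functor $\mathrm{Sym}$) to see that the pushout $\widetilde{\mathcal{O}}$, which is strictly multiplicative, is still weakly equivalent to $\mathcal{O}$; a fibrancy argument (Lemma~\ref{lemma2}) then shortens the chain. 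To make your proof stand on its own you must either supply this properness/pushout argument or an equivalent strictification device, and you should replace your rigidity claim by the relative formality input, which is available in exactly the range $N\ge 3$ you need.
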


For the meaning of "weakly equivalent as multiplicative operads", see Definition~\ref{multiplicative_def}. 
\begin{rem} In \emph{\cite{lam_vol}} it is only proved that $S_*(\mathcal{K}_N; \mathbb{R})$ and $H_*(\mathcal{K}_N; \mathbb{R})$ are weakly equivalent as "up to homotopy multiplicative operads" \emph{(Definition~\ref{multiplicative_def})}, when $N \geq 3$. Notice that this result is not proved for $N =2$  but only for $N \geq 3$ (see \cite[Theorem 1.4]{lam_vol}).
\end{rem}
An immediate consequence of Theorem~\ref{formality_thm} is the following formality result.
\begin{coro}\label{main_result}
For $N \geq 3$ Sinha's cosimplicial space $\mathcal{K}_N^{\bullet}$ is formal over $\mathbb{R}$.  
\end{coro}
Our method enables us also to determine the Gerstenhaber structure on the homology of the space of long knots.

We explain now with which  Gerstenhaber structures we endow $H_*(\overline{\mbox{Emb}}(\mathbb{R}, \mathbb{R}^N); \mathbb{R})$ and  $HH(H_*\mathcal{K}_N; \mathbb{R})$.

McClure and Smith construct in \cite{mcc_smith04} two operads, $\mathcal{D}_2$ and $\widetilde{\mathcal{D}}_2$, both weakly equivalent to the little $2$-disks operad $\mathcal{C}_2$. They show that if a cosimplicial space $\mathcal{O}^{\bullet}$ is built from a multiplicative operad $\mathcal{O}$, then $\mathcal{D}_2$ acts on the totalization $\mbox{Tot}\mathcal{O}^{\bullet}$ \cite[Theorem 9.1]{mcc_smith04}, and $\widetilde{\mathcal{D}}_2$ acts on the homotopy totalization $\mbox{hoTot} \mathcal{O}^{\bullet}$ \cite[Theorem 15.3]{mcc_smith04}. 
If in addition $\mathcal{O}$ is reduced (that is,  both $\mathcal{O}(0)$ and $\mathcal{O}(1)$ are weakly contractible), then the homotopy totalization of $\mathcal{O}^{\bullet}$ is weakly homotopy equivalent to the double loop space of a certain explicit space of maps of operads (Dwyer-Hess \cite{dwyer_hess10} and Turchin \cite{tur10} prove this result by using different approaches). 
Notice that neither Dwyer-Hess nor Turchin actually prove that their delooping is the delooping with respect to the McClure-Smith $\widetilde{\mathcal{D}}_2$ action.

Let us come back now to the particular case of Kontsevich's operad $\mathcal{K}_N$, which is reduced. Since it is multiplicative, it follows that the operad $\widetilde{\mathcal{D}}_2$ acts on $\mbox{hoTot}\mathcal{K}_N^{\bullet} \simeq \overline{\mbox{Emb}}(\mathbb{R}, \mathbb{R}^N)$. We also have a $\mathcal{C}_2$ geometric action (constructed by Budney in \cite{bud07}) on (framed) long knots. One question arises: are these two actions equivalent? This question is still open to my knowledge. But one thing is certain: each of these actions induces a Gerstenhaber algebra structure on $H_*(\overline{\mbox{Emb}}(\mathbb{R}, \mathbb{R}^N); \mathbb{R})$, and apparently it has never been checked whether these structures coincide. We now specify which one we choose. 

\textbf{(A)} The left hand side of equation (\ref{main_equation}) is equipped with the Gerstenhaber algebra structure induced by the action of $\widetilde{\mathcal{D}}_2$ on $\mbox{hoTot}\mathcal{K}_N^{\bullet} \simeq \overline{\mbox{Emb}}(\mathbb{R}, \mathbb{R}^N)$.


On the other hand, associated to a multiplicative operad $\mathcal{B}_*(\bullet)$ in chain complexes is its Hochschild homology $HH(\mathcal{B}_*(\bullet))$, defined first by Gerstenhaber and Voronov in \cite{ger_vor95}. It is endowed with a natural Gerstenhaber algebra structure (see \cite{ger_vor95} or \cite[Section 4]{sal06} for more details about this natural structure). 

\textbf{(B)} The right hand side of equation (\ref{main_equation}), $HH(H_*\mathcal{K}_N; \mathbb{R})$, is equipped with the natural Gerstenhaber algebra structure.
\begin{coro}\label{main_theorem}
For $N \geq 4$,  there exists an isomorphism of \emph{Gerstenhaber} algebras between the homology of space of long knots modulo immersions and the Hochschild homology $HH(H_*\mathcal{K}_N)$ over $\mathbb{R}$ when the homology $H_*(\overline{\mbox{Emb}}(\mathbb{R}, \mathbb{R}^N); \mathbb{R})$ is equipped with the Gerstenhaber algebra structure described by \emph{\textbf{(A)}}, and $HH(H_*\mathcal{K}_N; \mathbb{R})$ is equipped with the one  described by \emph{\textbf{(B)}}. That is,
\begin{eqnarray}\label{main_equation}
H_*(\overline{\mbox{Emb}}(\mathbb{R}, \mathbb{R}^N); \mathbb{R}) \cong HH(H_*\mathcal{K}_N; \mathbb{R}).
\end{eqnarray}
\end{coro}

\begin{rem}
For $N \geq 4$, it is proved in \emph{\cite{lam_tur_vol10}} that $HH(H_*\mathcal{K}_N; \mathbb{Q})$ and $H_*(\overline{\mbox{Emb}}(\mathbb{R}, \mathbb{R}^N); \mathbb{Q})$ are isomorphic as vector spaces but not as \emph{Gerstenhaber} algebras.
\end{rem}
\begin{rem}\label{sakai_rem} In \emph{\cite[Theorem 2.3]{sakai08}} \emph{Sakai} proves a result, announced earlier by \emph{Salvatore} \emph{\cite[Proposition 22]{sal06}}, which states that the graded vector spaces $HH(S_*\mathcal{K}_N)$ and $H_*(\overline{\mbox{Emb}}(\mathbb{R}, \mathbb{R}^N))$  are isomorphic as \emph{Gerstenhaber} algebras. 
\end{rem}
\begin{rem}
When a version of this paper was ready, \emph{Syunji Moriya} put in arXiv a paper \emph{\cite{moriya12}} in which equivalent results are independently discovered.
\end{rem}

\textbf{Outline of the paper.}\\
In Section~\ref{nonsym_operads} we first recall the definition of a non-symmetric operad. Next we show that the axiom of relative properness holds in the category of non-symmetric operads (this is just the non-symmetric version of \cite[Theorem 12.2.B]{fress08}). Finally we prove Lemma~\ref{main_lemma}, which is crucial for the rest of the paper.\\
In Section~\ref{formality_section} we apply Lemma~\ref{main_lemma} to the specific zig-zag between $S_*(\mathcal{K}_N; \mathbb{R})$ and its homology operad $H_*(\mathcal{K}_N; \mathbb{R})$, and we obtain the main result of the paper, which states that the Kontsevich's operad is formal over $\mathbb{R}$ as a multiplicative operad (Theorem~\ref{formality_thm}). This result implies immediately that Sinha's cosimplicial space is formal over $\mathbb{R}$ (Corollary~\ref{main_result}). Using now this formality, we give a very short proof of the collapse of the Vassiliev spectral sequence over rationals (Theorem~\ref{collapsing_thm}). We end the section with a proof of Corollary~\ref{main_theorem} which is also a consequence of Theorem~\ref{formality_thm}.

\textbf{Acknowledgements.} The author is grateful to Pascal Lambrechts for  suggesting the idea of left properness axiom and also for his encouragement. 
I also thank Yves Félix for giving me a copy of \cite{fress08}, which is a central reference for this paper. Obviously, I cannot forget to thank Benoît Fresse and Paolo Salvatore for answering all my emails with questions about the homotopy theory of operads and of cosimplicial spaces.

\section{The category of non-symmetric operads and equivalences of multiplicative operads}\label{nonsym_operads}
Throughout this section we denote by $\mathcal{C}\colon \!\!\!\!=(\mathcal{C}, \otimes, \mbox{\textbf{1}})$  a symmetric monoidal model category that is cofibrantly generated \cite[Section 11.1.5 and Section 11.3.3]{fress08}. The category $\mathcal{C}^{\mathbb{N}}$ of sequences $X=\{X(n)\}_{n \geq 0}$ in $\mathcal{C}$ will be equipped with the obvious model structure (that is, weak-equivalences, fibrations and cofibrations are all levelwise). Note that model categories for us are as in \cite{hovey99}.\\
We begin  by recalling the definition of a non-symmetric operad.  Next we state and prove the non-symmetric version of \cite[Theorem 12.2.B]{fress08}, which states that the axiom of relative properness  holds in the category of non-symmetric operads in $\mathcal{C}$.  We end with the crucial Lemma~\ref{main_lemma} for this paper.


A \emph{symmetric operad} in $\mathcal{C}$ consists of a symmetric sequence $\mathcal{O}=\{\mathcal{O}(n)\}_{n \geq 0}$ (for each $n$ the symmetric group $\Sigma_n$ acts on $\mathcal{O}_n$) of objects of $\mathcal{C}$ endowed with an unit element $\textbf{1} \longrightarrow \mathcal{O}(1)$ and a collection of morphisms
$$\mathcal{O}(k) \otimes \mathcal{O}(i_1) \otimes \cdots \otimes \mathcal{O}(i_k) \longrightarrow \mathcal{O}(i_1+\cdots+i_k)$$ 
that satisfy natural equivariance properties, unit and associative axioms $($May's axioms, see \cite{may72}$)$. 
If we omit the action of $\Sigma_n$, then  $\mathcal{O}$ is called a \textit{non-symmetric operad}. The category of non-symmetric operads in $\mathcal{C}$ will be denoted by $\mathcal{O}p_{ns}(\mathcal{C})$ or by the short notation $\mathcal{O}p_{ns}$. 
\begin{expl}\label{as_example}
Let $\mathcal{A}s=\{\mathcal{A}s(n)\}_{n \geq 0}$ be the sequence defined by $\mathcal{A}s(n)=\emph{\mbox{\textbf{1}}}$ for each $n$, the unit for the tensor product of $\mathcal{C}$. It is easy to see that $\mathcal{A}s$ is a non-symmetric operad, called the \emph{associative operad}.
\end{expl}
\begin{rem}\label{as_is_cofibrant}
The object $\mathcal{A}s$ is cofibrant in the category $\mathcal{C}^{\mathbb{N}}$ because the unit object \emph{\mbox{\textbf{1}}} is cofibrant in $\mathcal{C}$ by the unit axiom, which is a part of the definition \emph{(\cite[Section 11.3.3]{fress08})} of a symmetric monoidal model category. 
\end{rem}
Notice that the category $\mathcal{O}p_{ns}(\mathcal{C})$ usually has only semi-model structure, by the non-symmetric version of \cite[Theorem 12.2.A]{fress08}.
But, if $\mathcal{C}$ satisfies certain conditions \cite[Theorem 1.1]{muro11}, then $\mathcal{O}p_{ns}(\mathcal{C})$ turns out to be a model category. For example it is not difficult to see that the category of non-negatively graded chain complexes Ch$_{\mathbb{R}}$ satisfies such conditions. 
\begin{rem}\label{chain_model}
The category $\mathcal{O}p_{ns}(\emph{Ch}_{\mathbb{R}})$ is equipped with a model category structure in which weak-equivalences and fibrations are all levelwise. Recall that in $\emph{Ch}_{\mathbb{R}}$ a morphism is a weak-equivalence if it is a quasi-isomorphism, and a fibration if it is an epimorphism.
\end{rem}
Let $\mathcal{O}p_s(\mathcal{C})$ denote the category of symmetric operads in $\mathcal{C}$. Then the obvious forgetful functor $U\colon \mathcal{O}p_s(\mathcal{C}) \longrightarrow \mathcal{O}p_{ns}(\mathcal{C})$ admits a left adjoint $\mbox{Sym}\colon \mathcal{O}p_{ns}(\mathcal{C}) \longrightarrow \mathcal{O}p_s(\mathcal{C})$ defined by
$$\mbox{Sym}(P)(n)=\Sigma_n \otimes P(n)=\coprod_{\sigma \in \Sigma_n} P(n).$$
The following proposition was first proved by Spitzweck \cite{spi01} and by Berger-Moerdijk \cite{ber_moe03} for symmetric operads.
\begin{prop}\label{propaxiom_nonsymoperads} The category of non-symmetric operads in $\mathcal{C}$ satisfies the axiom of relative properness: If $P$ and $Q$ are objects of $\mathcal{O}p_{ns}$ that are cofibrant in $\mathcal{C}^{\mathbb{N}}$,  then the pushout in $\mathcal{O}p_{ns}$ of a weak-equivalence along a cofibration 
$$\xymatrix{P\; \ar@{>->}[r] \ar[d]_{\sim} & R \ar@{.>}[d] \\
          Q \ar@{.>}[r] & S}$$
gives a weak-equivalence $R \stackrel{\sim}{\longrightarrow} S$.         
\end{prop}
\begin{proof}
Since $P$ and $Q$ are cofibrant in the category $\mathcal{C}^{\mathbb{N}}$, it follows that Sym$(P)$ and Sym$(Q)$ are $\Sigma_*$-cofibrant. By applying the functor Sym to the diagram of the statement, we obtain the following pushout diagram in the category of symmetric operads
$$\xymatrix{\mbox{Sym}(P)\; \ar@{>->}[r] \ar[d]_{\sim} & \mbox{Sym}(R) \ar@{.>}[d] \\
          \mbox{Sym}(Q) \ar@{.>}[r] & \mbox{Sym}(S)}.$$

We apply now \cite[Theorem 12.2.B]{fress08} to get a weak-equivalence $\mbox{Sym}(R) \stackrel{\sim}{\longrightarrow} \mbox{Sym}(S)$. Since Sym$(R)(n)$ (respectively Sym$(S)(n)$) is the coproduct over the set $\Sigma_n$ of copies of the object $R(n)$ (respectively$S(n)$), it follows that the morphism $R \longrightarrow S$ in $\mathcal{O}p_{ns}$ is also a weak-equivalence.
\end{proof}
Recall now some necessary definitions.
\begin{defn}\label{multiplicative_def0}
\begin{enumerate}
\item[$\bullet$] A \emph{multiplicative operad} in $\mathcal{C}$ is a couple $(\mathcal{O}, \alpha)$ in which $\mathcal{O}$ is a  non-symmetric operad in $\mathcal{C}$ and  $\alpha \colon \mathcal{A}s \longrightarrow \mathcal{O}$ is a morphism of non-symmetric operads from the associative operad to $\mathcal{O}$. 
\item[$\bullet$] An \emph{up-to-homotopy multiplicative operad} consists of a triple $(\mathcal{O}, \mathcal{A}, \eta)$ in which $\mathcal{O}$ is a  non-symmetric operad in $\mathcal{C}$, $\mathcal{A}$ is an operad weakly equivalent to the associative operad $\mathcal{A}s$, and  $\eta \colon \mathcal{A} \longrightarrow \mathcal{O}$ is a morphism of non-symmetric operads.
\end{enumerate}
\end{defn}  
Notice that by Definition~\ref{multiplicative_def0}, every multiplicative operad is an up-to-homotopy multiplicative operad. Therefore the category of multiplicative operads is a full subcategory of the category of up-to-homotopy multiplicative operads. In the latter category, a morphism from $(\mathcal{O}, \mathcal{A}, \eta)$ to $(\mathcal{O}', \mathcal{A}', \eta')$ consists of morphisms $g \colon \mathcal{O} \longrightarrow \mathcal{O}'$ and  $f \colon \mathcal{A} \longrightarrow \mathcal{A}'$ such that $g \eta=\eta'f$.
\begin{defn}\label{multiplicative_def}
Two multiplicative operads $\mathcal{M}$ and $\mathcal{M}'$ are said to be \emph{weakly equivalent as multiplicative operads} (respectively \emph{weakly equivalent as up-to-homotopy multiplicative operads}) if there is a zig-zag
$$\xymatrix{\mathcal{M} & \mathcal{O}_1 \ar[l]_{\sim} \ar[r]^{\sim} & \cdots & \mathcal{O}_p \ar[l]_{\sim} \ar[r]^{\sim} & \mathcal{M}'}$$
in the category of multiplicative operads (respectively in the category of up-to-homotopy multiplicative operads).
\end{defn} 
We are now ready to state and prove our crucial lemma.
\begin{lem}\label{main_lemma}
In the category $\mathcal{O}p_{ns}$ of non-symmetric operads in $\mathcal{C}$, consider the following commutative diagram
$$\xymatrix{\mathcal{M}_1 & \mathcal{O} \ar[l]_{\sim}^{f_1} \ar[r]^{\sim}_{f_2} & \mathcal{M}_2 \\
\mathcal{A}s  \ar[u]_{\alpha_1} & \mathcal{A} \ar[l]_{\sim}^{\sigma} \ar[r]^{\sim}_{\sigma} \ar[u]_{\eta} & \mathcal{A}s.  \ar[u]_{\alpha_2} }$$
Assume that $\mathcal{A}$ is cofibrant as an object of $\mathcal{C}^{\mathbb{N}}$. Then the operads $\mathcal{M}_1$ and $\mathcal{M}_2$ are weakly equivalent as multiplicative operads.
\end{lem}
\begin{proof}
We begin by the following commutative diagram 
$$\xymatrix{\mathcal{A}s \ar[r]^{\alpha_1} & \mathcal{M}_1\\
              \mathcal{A} \ar[r]^{\eta} \ar[u]^{\sigma}_{\sim} \ar[d]_{\sigma}^{\sim} & \mathcal{O} \ar[u]^{f_1}_{\sim} \ar[d]_{f_2}^{\sim}\\
              \mathcal{A}s \ar[r]^{\alpha_2} & \mathcal{M}_2}$$
Since the object $\mathcal{A}$ is cofibrant in the category $\mathcal{C}^{\mathbb{N}}$, by applying the factorization axiom to the morphism $\eta \colon \mathcal{A} \longrightarrow \mathcal{O}$, we obtain the diagram 

$$\xymatrix{\mathcal{A}s \ar[rr]^{\alpha_1} & & \mathcal{M}_1\\
              \mathcal{A} \ar[rr]^{\eta} \ar[u]^{\sigma}_{\sim} \ar[dd]_{\sigma}^{\sim} \ar@{>->}[rd]^{\eta_1}& &\mathcal{O} \ar[u]^{f_1}_{\sim} \ar[dd]_{f_2}^{\sim}\\ 
       & Y \ar@{->>}[ru]_{\sim}^{\eta_2} & \\
 \mathcal{A}s \ar[rr]^{\alpha_2} & &\mathcal{M}_2} $$
By taking the pushout of the diagram 
$$\xymatrix{\mathcal{A}\;\ar@{>->}[r]^{\eta_1} \ar[d]^{\sim}_{\sigma} & Y \\
                 \mathcal{A}s       }$$
we obtain 
$$\xymatrix{\mathcal{A}s \ar[rr]^{\alpha_1} & & \mathcal{M}_1\\
              \mathcal{A} \ar[rr]^{\eta} \ar[u]^{\sigma}_{\sim} \ar[ddd]_{\sigma}^{\sim} \ar@{>->}[rd]^{\eta_1}& &\mathcal{O} \ar[u]^{f_1}_{\sim} \ar[ddd]_{f_2}^{\sim}\\ 
       & Y \ar@{->>}[ru]_{\sim}^{\eta_2} \ar[d]_{g}^{\sim} & \\
       &\widetilde{\mathcal{O}} \ar@{.>}[rd]& \\
 \mathcal{A}s \ar[rr]^{\alpha_2} \ar[ru]^{\tilde{\eta}} & &\mathcal{M}_2.}$$

Since the operad $\mathcal{A}s $ is cofibrant in $\mathcal{C}^{\mathbb{N}}$(see Remark~\ref{as_is_cofibrant} above) and $\mathcal{A}$ is also cofibrant in $\mathcal{C}^{\mathbb{N}}$ by hypothesis, and since the morphism $\sigma \colon\mathcal{A} \longrightarrow \mathcal{A}s $ is a weak-equivalence and the morphism $\eta_1 \colon \mathcal{A} \longrightarrow Y$ is a cofibration, it follows by Proposition~\ref{propaxiom_nonsymoperads} that the morphism $g \colon Y \longrightarrow \widetilde{\mathcal{O}}$ is a weak-equivalence.\\
Consider now the following pushout diagram 
$$\xymatrix{
    \mathcal{A}\; \ar@{>->}[r]^{\eta_1} \ar[d]_{\sigma}^{\sim} & Y \ar@/^/[rdd]^{f_2\eta_2}_{\sim} \ar[d]_{\sim}^{g} \\
     \mathcal{A}s \ar@/_/[rrd]_{\alpha_2} \ar[r]^{\tilde{\eta}} & \widetilde{\mathcal{O}} \ar@{.>}[rd]\\
     &&\mathcal{M}_2.}$$
     
The universal property of the pushout and the two-out-of-three axiom M2 allow us to obtain a weak-equivalence  
$$\tilde{f}_2 \colon \widetilde{\mathcal{O}} \stackrel{\sim}{\longrightarrow} \mathcal{M}_2.$$   
Similarly, by considering the pushout diagram 
$$\xymatrix{
    \mathcal{A}\; \ar@{>->}[r]^{\eta_1} \ar[d]_{\sigma}^{\sim} & Y \ar@/^/[rdd]^{f_1\eta_2}_{\sim} \ar[d]_{\sim}^{g} \\
     \mathcal{A}s \ar@/_/[rrd]_{\alpha_1} \ar[r]^{\tilde{\eta}} & \widetilde{\mathcal{O}} \ar@{.>}[rd]\\
     &&\mathcal{M}_1.}$$
we deduce the existence of a weak-equivalence $$\tilde{f}_1 \colon \widetilde{\mathcal{O}} \stackrel{\sim}{\longrightarrow} \mathcal{M}_1.$$ 
Finally, we obtain the following commutative  diagram
$$\xymatrix{\mathcal{M}_1 & \widetilde{\mathcal{O}} \ar[l]^{\sim}_{\tilde{f}_1} \ar[r]_{\sim}^{\tilde{f}_2} &  \mathcal{M}_2 \\
\mathcal{A}s \ar[u]_{\alpha_1} & \mathcal{A}s  \ar[l]_{\sim} \ar[r]^{\sim} \ar[u]_{\widetilde{\eta}} & \mathcal{A}s.  \ar[u]_{\alpha_2} }$$    
 
\end{proof}
\section{Formality of the Kontsevich operad as a multiplicative operad}\label{formality_section}
The goal of this section is to prove Theorem~\ref{formality_thm}, Corollary~\ref{main_result} and Corollary~\ref{main_theorem} announced in the introduction. We will also give a very short proof of Theorem~\ref{collapsing_thm}.  The ground field in this section is  $\mathbb{R}$.
\subsection{Proof of Theorem~\ref{formality_thm}}
In \cite{lam_vol}, P. Lambrechts and I. Voli\'c develop the details of Kontsevich's proof \cite{kont99} of the formality of little $N$-disks operad $\mathcal{B}_N=\{\mathcal{B}_N(k)\}_{k \geq 0}$ over the real numbers. 
Note that this formality and its relative version hold in the category $\mathcal{O}p_{ns}(\mbox{Ch}_{\mathbb{R}})$ of non-symmetric operads in chain complexes over $\mathbb{R}$.
\begin{thm}\emph{\cite[Theorem 1.4]{lam_vol}}\label{formality_littledisks}
The little $N$-disks operad is relatively formal over the real numbers when $N \geq 3$. 
\end{thm}
The authors of \cite{lam_vol} prove Theorem~\ref{formality_littledisks} by explicitly constructing  a diagram
\begin{eqnarray}\label{entire_zigzag}
\xymatrix{C_* (\mathcal{K}_N) & C_*(\mathcal{F}_N) \ar[l]_{\sim} \ar[r]^{\sim} & \mathcal{D}^{\vee}_N & H_*(\mathcal{K}_N) \ar[l]_{\sim}\\
              C_*(\mathcal{K}_1) \ar[u] & C_*(\mathcal{F}_1) \ar[l]_{\sim} \ar[r]^{\sim} \ar[u] & \mathcal{D}_1^{\vee} \ar[u] & H_*(\mathcal{K}_1). \ar[l]_{\sim} \ar[u]}
\end{eqnarray}							
In the previous diagram, the quasi-isomorphism $C_*(\mathcal{F}_N) \stackrel{\sim}{\longrightarrow} \mathcal{D}^{\vee}_N$ is built in \cite[Section 9]{lam_vol}, and the quasi-isomorphism $H_*(\mathcal{K}_N)\stackrel{\sim}{\longrightarrow} \mathcal{D}^{\vee}_N$ is built in \cite[Section 8]{lam_vol}.
\begin{proof}[Proof of Theorem~\ref{formality_thm}]
             
In the lower row of (\ref{entire_zigzag}), only $C_*(\mathcal{F}_1)$ is not the associative operad. The other three: $C_*(\mathcal{K}_1)$, $\mathcal{D}_1^{\vee}$, $H_*(\mathcal{K}_1)$ are, and moreover the two morphisms from $C_*(\mathcal{F}_1)$ to $\mathcal{A}s$ are the same, and the objects $C_*(\mathcal{K}_1)$ and $C_*(\mathcal{F}_1)$ are cofibrant in the model category $\mbox{Ch}_{\mathbb{R}}^{\mathbb{N}}$, which implies the desired result by Lemma~\ref{main_lemma}.
\end{proof} 

\subsection{Proof of Corollary~\ref{main_result} and Theorem~\ref{collapsing_thm}}
Recall first that a cosimplicial space $X^{\bullet}$ is said to be formal over $\mathbb{R}$ if the diagram $X^{\bullet} \colon \Delta \longrightarrow \mbox{Top}$ is formal in the sense that $S_*(X^{\bullet}; \mathbb{R})$ and  $H_*(X^{\bullet}; \mathbb{R})$ are weakly equivalent in the category of cosimplicial chain complexes. 
\begin{proof}[Proof of Corollary~\ref{main_result} and Theorem~\ref{collapsing_thm}]
By Theorem~\ref{formality_thm} the operads $S_* (\mathcal{K}_N)$ and $H_* (\mathcal{K}_N)$ are weakly equivalent as multiplicative operads. Therefore the associated cosimplicial objects $(S_* (\mathcal{K}_N))^{\bullet}$ and $(H_* (\mathcal{K}_N))^{\bullet}$  are weakly equivalent in the category of cosimplicial chain complexes over $\mathbb{R}$, hence $S_*(\mathcal{K}_N^{\bullet})$ is formal over $\mathbb{R}$. Now the collapsing of the Bousfield Kan spectral sequence comes from the fact that in the $E^2$ page we can replace the column $S_*(\mathcal{K}_N^p)$ by the homology $H_*(\mathcal{K}_N^p)$, hence the vertical differential vanishes and the spectral sequence collapses (see \cite[Proposition 3.2]{lam_tur_vol10}).
\end{proof}

\subsection{Gerstenhaber algebra structure on the homology of the space of long knots}\label{poisson_section}
The goal here is to prove corollary~\ref{main_theorem} announced in the introduction. 
In \cite{mcc_smith040} McClure and Smith construct an $E_2$ chain operad $\mathcal{T}_2$ that acts on the Hochshild complex $CH(B_*)$, when $B_*$ is an operad with multiplication in chain complexes (recall that an \textit{$E_2$ chain operad} is a chain operad weakly equivalent to the normalized singular chain of the little $2$-cubes operad). This action induces a Gerstenhaber algebra structure on the Hochshild homology  $HH(B_*)$. It is very important to note that this structure coincides with the natural one \cite{ger_vor95} because $\mathcal{T}_2$ is a solution of Deligne's conjecture.
Let $\mathcal{T}_2$-algebras denote the category of chain complexes equipped with an action of the operad $\mathcal{T}_2$. Let $\mathcal{O}p_*(\mbox{Ch}_{\mathbb{R}})$ denote the category of multiplicative non-symmetric operads in chain complexes over real numbers. 
\begin{lem}\emph{\cite{mcc_smith040}}\label{mcc_smith_lemma}
There exists a functor 
$$CH \colon \mathcal{O}p_*(\emph{Ch}_{\mathbb{R}}) \longrightarrow \mathcal{T}_2\emph{-algebras}$$
that preserves weak-equivalences.
\end{lem}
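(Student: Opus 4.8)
The plan is to check in turn the three assertions bundled into the statement: that $CH$ is a functor to chain complexes, that it refines to a functor valued in $\mathcal{T}_2$-algebras, and that it carries weak equivalences to quasi-isomorphisms. The first two are a matter of inspecting McClure--Smith's construction; only the third requires genuine work.

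First I would recall how $CH$ is built. Given a multiplicative operad $B_*$ with structure map $\alpha\colon\mathcal{A}s\to B_*$, one forms --- exactly as in the passage from a multiplicative operad to its cosimplicial object recalled in the introduction (\cite{mcc_smith04}) --- the cosimplicial chain complex $B_*^{\bullet}$ with $B_*^{p}=B_*(p)$, whose cofaces and codegeneracies are assembled from the partial compositions $\circ_i$ of $B_*$ together with $\alpha$; then $CH(B_*)$ is the (conormalized) total complex of $B_*^{\bullet}$. A morphism of multiplicative operads is compatible with the $\circ_i$ and with the structure maps, hence induces a map of cosimplicial chain complexes, and so of total complexes. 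Thus $CH\colon\mathcal{O}p_*(\mathrm{Ch}_{\mathbb{R}})\to\mathrm{Ch}_{\mathbb{R}}$ is a functor.

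Next, the $\mathcal{T}_2$-action. McClure and Smith produce the structure maps $\mathcal{T}_2(k)\otimes CH(B_*)^{\otimes k}\to CH(B_*)$ by explicit combinatorial formulas in which the only data of $B_*$ that enter are the operadic compositions and the multiplication $\mu\in B_*(2)$ coming from $\alpha$; both are preserved by any morphism of multiplicative operads. Consequently, for such a morphism $f\colon B_*\to B'_*$, the map $CH(f)$ intertwines the two $\mathcal{T}_2$-actions, i.e.\ it is a morphism of $\mathcal{T}_2$-algebras. This promotes $CH$ to a functor $\mathcal{O}p_*(\mathrm{Ch}_{\mathbb{R}})\to\mathcal{T}_2\text{-algebras}$.

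Finally, suppose $f\colon B_*\to B'_*$ is a weak equivalence, i.e.\ a quasi-isomorphism in every arity. Then $B_*^{\bullet}\to{B'_*}^{\bullet}$ is a levelwise quasi-isomorphism of cosimplicial chain complexes, and since conormalization preserves levelwise quasi-isomorphisms the same holds after conormalizing. To transfer this to total complexes I would filter $CH(B_*)$ by cosimplicial (equivalently, arity) degree; the associated spectral sequence has $E^1$-term assembled from the arity-wise homologies $\{H_*(B_*(n))\}_n$, and $f$ induces an isomorphism on $E^1$, hence on every later page and on $E^{\infty}$. The main obstacle is convergence: the totalization runs over infinitely many arities and is a product, so the step from an isomorphism on $E^{\infty}$ to a quasi-isomorphism of abutments is not automatic. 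I would dispose of it by noting that the arity filtration is complete and exhaustive and that the spectral sequence converges strongly --- which in the situations we actually use is immediate, since the operads $S_*(\mathcal{K}_N)$ and $H_*(\mathcal{K}_N)$ are concentrated in non-negative degrees and are finite-dimensional in each degree, so every $E^r_{p,q}$ is finite-dimensional and stabilizes in $r$ --- and then applying the comparison theorem for (conditionally) convergent spectral sequences; this is precisely the convergence analysis carried out by McClure--Smith, and in cosimplicial form it is the one implicit in \cite{lam_tur_vol10}. It follows that $CH(f)\colon CH(B_*)\xrightarrow{\ \sim\ }CH(B'_*)$, so $CH$ preserves weak equivalences.
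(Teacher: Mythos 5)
You should note at the outset that the paper contains no proof of this lemma at all: it is imported verbatim from McClure--Smith \cite{mcc_smith040}, so there is no argument of the author's to compare yours against. Your reconstruction of the first two points is fine and is indeed just inspection: $CH(B_*)$ is the total complex of the cosimplicial chain complex attached to the multiplicative structure, a morphism of multiplicative operads preserves the $\circ_i$ and the multiplication, hence induces a map of cosimplicial objects and intertwines the explicit $\mathcal{T}_2$-structure maps. The real content is the third point, and that is where your argument has a gap.

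The gap is the convergence step. As you yourself observe, $CH(B_*)$ in total degree $n$ is a product $\prod_p N^p(B_*)_{n+p}$, and since each $B_*(p)$ lives in homological degrees $\geq 0$, \emph{every} total degree receives contributions from infinitely many cosimplicial degrees $p$. The arity-filtration spectral sequence is therefore of second-quadrant type, and an isomorphism on $E^{\infty}$ does not by itself yield an isomorphism of abutments: you would need conditional convergence of both spectral sequences together with the vanishing of Boardman's $RE^{\infty}$, neither of which you verify. The verification you do sketch is also off target: $S_*(\mathcal{K}_N)(p)$ is a singular chain complex and is not finite-dimensional in any degree (only its homology is), and the lemma is stated --- and used in the paper, via the anonymous intermediate operads of the zig-zag (\ref{zigzag_formality}) --- for arbitrary objects of $\mathcal{O}p_*(\mathrm{Ch}_{\mathbb{R}})$, so a proof valid only for finite-type operads does not suffice as stated. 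The clean repair avoids the spectral sequence entirely. Set $F_s\subseteq CH(B_*)$ equal to the elements supported in cosimplicial degrees $\geq s$ and $Q_s=CH(B_*)/F_s$. Each $Q_s$ carries a \emph{finite} filtration whose associated graded pieces are the shifted conormalizations $N^p(B_*)$ for $p<s$; a levelwise quasi-isomorphism $B_*\to B'_*$ induces quasi-isomorphisms on these pieces, hence on each $Q_s$ by induction and the five lemma. Since the tower $\{Q_s\}$ consists of surjections and $CH(B_*)=\lim_s Q_s$, the natural Milnor sequence
$$0\longrightarrow {\lim}^1_s\, H_{n+1}(Q_s)\longrightarrow H_n(CH(B_*))\longrightarrow \lim_s H_n(Q_s)\longrightarrow 0$$
together with the five lemma gives the quasi-isomorphism $CH(B_*)\xrightarrow{\ \sim\ }CH(B'_*)$ with no finiteness hypotheses whatsoever. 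I recommend replacing your convergence discussion with this argument.
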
   
We are now ready to prove Corollary~\ref{main_theorem}.
\begin{proof}[Proof of Corollary~\ref{main_theorem}]
First, in the category of multiplicative operads in chain complexes over $\mathbb{R}$, we have by Theorem~\ref{formality_thm} a zig-zag
\begin{eqnarray}\label{zigzag_formality}
\xymatrix{S_*(\mathcal{K}_N) & \ar[l]_-{\sim} \cdots \ar[r]^-{\sim} & H_*(\mathcal{K}_N).}
\end{eqnarray}
Next, by applying the normalized Hochshild complex functor $CH$ to (\ref{zigzag_formality}), we obtain a zig-zag 
\begin{eqnarray}\label{zigzag_poisson}
\xymatrix{CH(S_*(\mathcal{K}_N)) & \ar[l]_-{\sim} \cdots \ar[r]^-{\sim} & CH(H_*(\mathcal{K}_N))}
\end{eqnarray}
in the category of $\mathcal{T}_2$-algebras by Lemma~\ref{mcc_smith_lemma}. Therefore the homology of (\ref{zigzag_poisson}) gives 
\begin{eqnarray}\label{zigzag_poisson2}
\xymatrix{HH(S_*(\mathcal{K}_N)) & \ar[l]_-{\cong} \cdots \ar[r]^-{\cong} & HH(H_*(\mathcal{K}_N))},
\end{eqnarray}
which respects the Gerstenhaber algebra structure induced by the $H_*(\mathcal{T}_2)$ action.
Finally, the desired result follows from (\ref{zigzag_poisson2}) and Remark~\ref{sakai_rem}.
\end{proof}

%
%
%
%

\end{document}